\newtheorem{thm}{Theorem}[section]
\newtheorem{cor}[thm]{Corollary}
\newtheorem{lem}[thm]{Lemma}
\newtheorem{prop}[thm]{Proposition}
\theoremstyle{definition}
\newtheorem{defn}[thm]{Definition}
\theoremstyle{remark}
\newtheorem{rem}[thm]{Remark}
\numberwithin{equation}{section}
\begin{document}
\title{On maximal regularity and semivariation of $\alpha$-times resolvent families}
\author{Fu-Bo Li and Miao Li}
\address{Department of Mathematics, Sichuan University, Chengdu, Sichuan 610064, P.R. China}
\email{lifubo@scu.edu.cn; mli@scu.edu.cn}

\thanks{2000 {\it Mathematics Subject Classification}. Primary 45N05; Secondary 26A33, 34G10.\\
\mbox{     }\hspace{0.3cm}{\it Key words and phrases.}
$\alpha$-times resolvent family, maximal regularity, semivariation
\\
 \mbox{     }\hspace{0.3cm}The authors were
supported by the NSFC-RFBR Programme (Grant No. 108011120015).}

\begin{abstract}
Let $1< \alpha <2$ and $A$ be the generator of an $\alpha$-times
resolvent family $\{S_\alpha(t)\}_{t \ge 0}$ on a Banach space $X$.
It is shown that the fractional Cauchy problem ${\bf D}_t^\alpha
u(t) = Au(t)+f(t)$, $t \in [0,r]$; $u(0), u'(0) \in D(A)$ has
maximal regularity on $C([0,r];X)$ if and only if $S_\alpha(\cdot)$
is of bounded semivariation on $[0,r]$.
\end{abstract}

\maketitle
\section{Introduction}
Many initial and boundary value problems can be reduced to an
abstract Cauchy problem of the form
\begin{equation}\label{ACPf-first}
\begin{array}{ll}
u'(t) = Au(t) + f(t),  & t \in [0,r] \\
u(0)=x \in D(A) &
\end{array}
\end{equation}
where $A$ is the generator of a $C_0$-semigroup. One says that
(\ref{ACPf-first}) has maximal regularity on $C([0,r];X)$ if for
every $f \in C([0,r];X)$ there exists a unique $u \in C^1([0,r];X)$
satisfying (\ref{ACPf-first}). From the closed graph theorem it
follows easily that if there is maximal regularity on $C([0,r];X)$,
then there exists a constant $C>0$ such that
$$
\|u'\|_{C([0,r];X)} + \|Au\|_{C([0,r];X)} \le \|f\|_{C([0,r];X)}.
$$
Travis \cite{Tr} proved that the maximal regularity is equivalent to
 the $C_0$-semigroup generated by $A$ being of bounded
semivariation on $[0,r]$.

Chyan, Shaw and Piskarev \cite{CSP} gave similar results for second
order Cauchy problems. More precisely, they showed that the second
order Cauchy problem
\begin{equation}\label{ACPf-second}
\begin{array}{ll}
u''(t) = Au(t) + f(t),  & t \in [0,r] \\
u(0)=x,\, u'(0)=y, & x,y \in D(A)
\end{array}
\end{equation}
has maximal regularity on $[0,r]$ if and only if the cosine opeator
function generated by $A$ is of bounded semivariation on $[0,r]$.

 In this paper we will consider the maximal regularity for
fractional Cauchy problem
\begin{equation}\label{ACPf-alpha}
\begin{array}{ll}
{\bf D}_t^{\alpha}u(t) = Au(t) + f(t),  & t \in [0,r] \\
u(0)=x, \, u'(0)=y, & x,y \in D(A)
\end{array}
\end{equation}
where $\alpha \in (1,2)$, $A$ is the generator of an $\alpha$-times
resolvent family (see Definition \ref{alpha-resolvent} below) and
${\bf D}_t^\alpha u$ is understood in the Caputo sense.
 We show that (\ref{ACPf-alpha}) has maximal
regularity on $C([0,r];X)$ if and only if the corresponding
$\alpha$-times resolvent family is of bounded semivariation on
$[0,r]$.

\section{Preliminaries}

 Let $1<\alpha <2$, $g_0(t):=\delta(t)$ and
$g_\beta(t):=\frac{t^{\beta-1}}{\Gamma(\beta)}(\beta>0)$ for $t>0$.
Recall the Caputo fractional derivative of order $\alpha>0$
\begin{eqnarray*}
{\bf D}_t^\alpha f(t):=\int_0^t
g_{2-\alpha}(t-s)\frac{d^2}{ds^2}f(s)ds, \quad t \in [0,r]
\end{eqnarray*}
for $f \in C^2([0,r];X)$. The condition that $f \in C^2([0,r];X)$
can be relaxed to $f \in C^1([0,r];X)$ and $g_{2-\alpha}* (f- f(0) -
f'(0)g_2) \in C^2([0,r];X)$, for details and further properties see
\cite{Baj} and references therein. And in the above we denote by
\begin{eqnarray*}
(g_\beta*f)(t)=\int_0^tg_\beta(t-s)f(s)ds
\end{eqnarray*}
the convolution of $g_\beta$ with $f$. Note that $g_\alpha* g_\beta
= g_{\alpha +\beta}$.

Consider a closed linear operator $A$ densely defined in a Banach
space $X$ and the fractional evolution equation (\ref{ACPf-alpha}).

\begin{defn}\label{defn-solution}
 A function $u \in C([0,r];X)$ is called a {\it strong
solution} of (\ref{ACPf-alpha}) if
$$
u \in C([0,r]; D(A))\cap C^1 ([0,r];X), \quad g_{2-\alpha}*(u(t) -
x-ty) \in C^2([0,r];X)
$$
and (\ref{ACPf-alpha}) holds on $[0,r]$.
$u\in C([0,r]; X)$ is called a {\it mild solution} of
(\ref{ACPf-alpha}) if $g_\alpha * u \in D(A)$ and
$$
u(t) - x - ty = A(g_\alpha *u)(t) + (g_\alpha*f)(t)
$$
for $t \in [0,r]$.
\end{defn}
\begin{defn}\label{alpha-resolvent}
Assume that $A$ is a closed, densely defined linear operator on $X$.
A family $\{S_\alpha(t)\}_{t\geq0}\subset B(X)$ is called an {\it
$\alpha$-times resolvent family} generated by $A$ if the following
conditions are satisfied:

(a) $S_\alpha(\cdot)$ is strongly continuous on $\mathbb{R}_+$ and
$S_\alpha(0)=I$;

(b) $S_\alpha(t)D(A)\subset D(A)$ and $AS_\alpha(t)x=S_\alpha(t)Ax$
for all $x\in D(A),t\geq0$;

(c) For all $x\in D(A)$ and $t\geq0$, $S_\alpha(t)x=x+ (g_\alpha
* S_\alpha)(t)Ax$.
\end{defn}

\begin{rem}\label{g*S}
Since $A$ is closed and densely defined, it is easy to show that for
all $x \in X$, $(g_\alpha * S_\alpha)(t)x \in D(A)$ and $ A(g_\alpha
* S_\alpha)(t) x = S_\alpha x  - x. $
\end{rem}

The alpha-times resolvent families are closely related to the
solutions of (\ref{ACPf-alpha}). It was shown in \cite{Baj} that if
$A$ generates an $\alpha$-times resolvent family $S_\alpha(\cdot)$,
then (\ref{ACPf-alpha}) has a unique strong solution given by
$S_\alpha(t)x + \int_0^t S_\alpha(s)yds$.\\

Next we recall the definition of functions of bounded semivariation
(see e.g. \cite{Ho}). Given a closed interval $[a,b]$ of the real
line, a subdivision of $[a,b]$ is a finite sequence $d: a= d_0 < d_1
< \cdots < d_n=b$. Let $D[a,b]$ denote the set of all subdivisions
of $[a,b]$.

\begin{defn} For $G: [a,b] \to B(X)$ and $d \in D[a,b]$, define
$$
SV_d[G]= \sup\{\|\sum_{n=1}^n [G(d_i)-G(d_{i-1})]x_i\|: x_i \in X,
\|x_i\| \le 1\}
$$
and $SV[G] = \sup \{SV_d[G]: d \in D[a,b]\}$. We say $G$ is of
bounded sevivariation if $SV[G] < \infty$.
\end{defn}

\section{Main results}

We begin with some properties on $\alpha$-times resolvent families
which will be needed in the sequel.

\begin{prop}\label{alpha-prop}
Let $1< \alpha < 2$ and $\{S_\alpha(t)\}_{t \ge 0}$ be the
$\alpha$-times resolvent family with generator $A$. Define
$$
P_\alpha (t)x= (g_{\alpha -1} * S_\alpha)(t)x= \int_0^t
g_{\alpha-1}(t-s)S_\alpha(s)x ds,\quad x \in X,
$$
then the following statements are true.

 (a) For every $x \in X$, $\int_0^t P_\alpha(s)x ds \in D(A)$ and
$$
A \int_0^t P_\alpha(s)x ds = S_\alpha(t)x - x;
$$

(b) For every $x \in X$, $0\le a, b \le t$, $\int_a^b s
P_\alpha(t-s) xdx \in D(A)$ and
$$
A\int_a^b sP_\alpha(t-s)xds = aS_\alpha(t-a)x - bS_\alpha(t-b)x +
\int_a^b S_\alpha(t-s)xds;
$$

(c) For every $x \in X$, $\int_0^t g_\alpha(t-s)sP_\alpha(s)x ds \in
D(A)$ and
$$
A\Big(\int_0^t g_\alpha(t-s)sP_\alpha(s)x ds \Big)= -\alpha
(g_\alpha*S_\alpha)(t)x + t P_\alpha(t)x;
$$

(d) If $f \in C([0,r];X)$, then $g_\alpha* S_\alpha *f \in D(A)$ and
\begin{equation}\label{convolution}
A(g_\alpha * S_\alpha *f) = (S_\alpha - 1)* f.
\end{equation}
\end{prop}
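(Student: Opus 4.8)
The unifying tool throughout is the elementary fact that a closed operator commutes with Bochner integration and with convolution against a scalar kernel: if $u:[0,t]\to X$ is continuous with $u(s)\in D(A)$ for every $s$ and $Au(\cdot)$ continuous, then $\int_0^t u(s)\,ds\in D(A)$ with $A\int_0^t u(s)\,ds=\int_0^t Au(s)\,ds$, and consequently $A(k*u)=k*(Au)$ for any locally integrable scalar kernel $k$. I will also freely use the scalar identities $g_\beta*g_\gamma=g_{\beta+\gamma}$, $g_1\equiv 1$, and the two absorption identities $(t-s)g_{\alpha-1}(t-s)=(\alpha-1)g_\alpha(t-s)$ and $(t-s)g_\alpha(t-s)=\alpha g_{\alpha+1}(t-s)$, all immediate from $\Gamma(\beta+1)=\beta\Gamma(\beta)$. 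Before treating the four items I would record two auxiliary facts. First, since $\int_0^t P_\alpha(s)x\,ds=(g_1*g_{\alpha-1}*S_\alpha)(t)x=(g_\alpha*S_\alpha)(t)x$, statement (a) is exactly Remark \ref{g*S}. Second, reassociating $g_\alpha*P_\alpha=g_{\alpha-1}*(g_\alpha*S_\alpha)$ and passing $A$ through the scalar convolution (using Remark \ref{g*S} on the inner factor, together with $\int_0^t g_{\alpha-1}(t-s)\,ds\,x=g_\alpha(t)x$) yields the key lemma
\[
A(g_\alpha*P_\alpha)(t)x=P_\alpha(t)x-g_\alpha(t)x,
\]
and integrating this once more gives $A(g_{\alpha+1}*P_\alpha)(t)x=(g_\alpha*S_\alpha)(t)x-g_{\alpha+1}(t)x$.

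For (b) I would integrate by parts in $s$. Writing $Q(\tau):=\int_0^\tau P_\alpha(\sigma)x\,d\sigma=(g_\alpha*S_\alpha)(\tau)x$, an antiderivative of $s\mapsto P_\alpha(t-s)x$ is $-Q(t-s)$, so
\[
\int_a^b sP_\alpha(t-s)x\,ds=aQ(t-a)-bQ(t-b)+\int_a^b Q(t-s)\,ds.
\]
Each term lies in $D(A)$ because every value of $Q$ does; applying $A$ and invoking part (a) replaces each $Q(\cdot)$ by $S_\alpha(\cdot)x-x$. The scalar contributions $bx-ax-(b-a)x$ cancel, leaving precisely $aS_\alpha(t-a)x-bS_\alpha(t-b)x+\int_a^b S_\alpha(t-s)x\,ds$.

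For (c) the decisive move is the algebraic splitting $s=t-(t-s)$ inside the convolution. Using $(t-s)g_\alpha(t-s)=\alpha g_{\alpha+1}(t-s)$ one obtains
\[
\int_0^t g_\alpha(t-s)\,sP_\alpha(s)x\,ds=t\,(g_\alpha*P_\alpha)(t)x-\alpha\,(g_{\alpha+1}*P_\alpha)(t)x.
\]
Both terms lie in $D(A)$ by the two auxiliary facts, and applying $A$ gives $tP_\alpha(t)x-tg_\alpha(t)x-\alpha(g_\alpha*S_\alpha)(t)x+\alpha g_{\alpha+1}(t)x$; the scalar terms cancel since $tg_\alpha(t)=\alpha g_{\alpha+1}(t)$, producing the asserted $-\alpha(g_\alpha*S_\alpha)(t)x+tP_\alpha(t)x$. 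Item (d) is then the most direct: reassociating $g_\alpha*S_\alpha*f=(g_\alpha*S_\alpha)*f$, each integrand $(g_\alpha*S_\alpha)(t-s)f(s)$ lies in $D(A)$ with continuous $A$-image $(S_\alpha(t-s)-I)f(s)$ by Remark \ref{g*S}, so closedness lets $A$ pass under the integral sign to yield $(S_\alpha-I)*f$.

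I expect the main obstacle to be part (c): parts (a), (b), (d) are near-immediate consequences of Remark \ref{g*S} plus closedness and a single integration by parts, whereas (c) requires first isolating the correct auxiliary identity for $A(g_\alpha*P_\alpha)$ and then guessing the splitting $s=t-(t-s)$ that converts the weight $sP_\alpha(s)$ into two convolutions to which that identity applies. Once the splitting is in hand, the verification is purely the bookkeeping of the scalar $g_\beta$-cancellations indicated above.
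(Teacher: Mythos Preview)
Your proof is correct and follows essentially the same route as the paper: (a) via $g_1*g_{\alpha-1}=g_\alpha$ and Remark~\ref{g*S}, (b) via integration by parts against the antiderivative $(g_\alpha*S_\alpha)(t-s)x$, and (c) via the splitting $s=t-(t-s)$ to reduce to $t(g_\alpha*P_\alpha)-\alpha(g_{\alpha+1}*P_\alpha)$, each rewritten as a scalar convolution with $g_\alpha*S_\alpha$ so that Remark~\ref{g*S} applies. The only cosmetic difference is in (d), where the paper argues by step-function approximation while you pass $A$ directly through the Bochner integral; both are standard and equivalent here.
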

\begin{proof}
(a) follows from the fact that $\int_0^t P_\alpha(s)x ds = (g_1
* g_{\alpha -1}*S_\alpha)(t)x= (g_\alpha * S_\alpha)(t)x \in D(A)$
and $A (g_\alpha*S_\alpha)(t)x = S_\alpha(t)x -x$ by Remark
\ref{g*S}.

(b) By integration by parts we have
\begin{eqnarray*}
\int_a^b sP_\alpha(t-s)xds &=& \int_a^b s d_s [\int_0^s
P_\alpha(t-\tau)x d\tau] \\
&=&  \int_a^b sd_s [(g_\alpha*S_\alpha)(t-s)x]\\
&=& - s(g_\alpha*S_\alpha)(t-s)x\Big|_a^b + \int_a^b
(g_\alpha*S_\alpha)(t-s)x ds\\
&=& a (g_\alpha*S_\alpha)(t-a)x - b (g_\alpha*S_\alpha)(t-b)x+
\int_a^b (g_\alpha*S_\alpha)(t-s)x ds,
\end{eqnarray*}
since $ (g_\alpha*S_\alpha)(t)x ds \in D(A)$  by Remark \ref{g*S},
operating $A$ on both sides of the above identity gives (b).

 (c) follows from the fact that
\begin{eqnarray*}
&&\int_0^t g_\alpha(t-s) sP_\alpha(s)x ds \\
&=& \int_0^t g_\alpha(t-s)(s-t)P_\alpha(s)x ds + t\int_0^t
g_\alpha(t-s)P_\alpha(s)x ds \\
&=& - \alpha\int_0^t g_{\alpha+1}(t-s)P_\alpha(s)x ds +
t(g_\alpha*P_\alpha)(t)x\\
&=& -\alpha(g_{\alpha+1}*P_\alpha)(t)x +t(g_\alpha*P_\alpha)(t)x \\
&=&-\alpha(g_{\alpha+1}*g_{\alpha-1}*S_\alpha)(t)x
+t(g_\alpha*g_{\alpha-1}* S_\alpha)(t)x \\
&=&-\alpha(g_{\alpha}*g_{\alpha}*S_\alpha)(t)x
+t(g_{\alpha-1}*g_{\alpha}* S_\alpha)(t)x
\end{eqnarray*}
belongs to $D(A)$ and
\begin{eqnarray*}
A(\int_0^t g_\alpha(t-s) sP_\alpha(s)x ds)&=&-\alpha
(g_{\alpha}*A(g_{\alpha}*S_\alpha))(t)x
+t(g_{\alpha-1}*A(g_{\alpha}*
S_\alpha))(t)x \\
&=& -\alpha (g_\alpha*(S_\alpha - 1))(t)x + t (g_{\alpha-1}
*(S_\alpha
- 1))(t)x\\
&=& - \alpha (g_\alpha*S_\alpha)(t)x + \alpha g_{\alpha+1}(t)x + t
(g_{\alpha-1}*S_\alpha)(t) - tg_{\alpha}(t)x \\
&=&- \alpha (g_\alpha*S_\alpha)(t)x+ t P_\alpha(t)x.
\end{eqnarray*}

 (d) (\ref{convolution}) is true for step
functions, and then for continuous functions by the closedness of
$A$.
\end{proof}

The following two lemmas can be proved similarly as that in
\cite{CSP, Tr}.

\begin{lem}\label{P*finD(A)}
If $f \in C([0,r];X)$ and the $\alpha$-times resolvent family
$S_\alpha(t)$ is of bounded semivariation on $[0,r]$, then
$(P_\alpha
* f)(t) \in D(A)$ and
$$
A(P_\alpha * f)(t) = -\int_0^t  d_s [S_\alpha(t-s)]f(s).
$$
\end{lem}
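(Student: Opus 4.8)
The plan is to first establish the identity for step functions, where it reduces to a telescoping sum governed by Proposition \ref{alpha-prop}(a), and then to pass to arbitrary continuous $f$ by uniform approximation together with the closedness of $A$. Before starting, I would record that the right-hand side makes sense: since $S_\alpha(\cdot)$ is of bounded semivariation on $[0,r]$, so is the reflected family $s\mapsto S_\alpha(t-s)$ on $[0,t]$, and the Riemann--Stieltjes integral of a continuous $X$-valued function against an operator-valued function of bounded semivariation exists (see \cite{Ho}) and obeys
$$
\Big\|\int_0^t d_s[S_\alpha(t-s)]g(s)\Big\|\le SV[S_\alpha]\cdot\sup_{0\le s\le t}\|g(s)\|
$$
for continuous $g$.

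For a single step $f=x\chi_{[a,b)}$ with $0\le a<b\le t$ and $x\in X$, I would compute
$$
(P_\alpha*f)(t)=\int_a^b P_\alpha(t-s)x\,ds=\int_0^{t-a}P_\alpha(\sigma)x\,d\sigma-\int_0^{t-b}P_\alpha(\sigma)x\,d\sigma,
$$
which lies in $D(A)$ by Proposition \ref{alpha-prop}(a). Applying $A$ and invoking that same part gives
$$
A(P_\alpha*f)(t)=\big[S_\alpha(t-a)x-x\big]-\big[S_\alpha(t-b)x-x\big]=S_\alpha(t-a)x-S_\alpha(t-b)x,
$$
which is precisely $-\int_a^b d_s[S_\alpha(t-s)]x$. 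By linearity this propagates to any step function $f=\sum_j x_j\chi_{[a_{j-1},a_j)}$, the two sides matching as the telescoping sum $\sum_j\big[S_\alpha(t-a_{j-1})-S_\alpha(t-a_j)\big]x_j$, so the lemma holds for step functions.

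For general $f\in C([0,r];X)$ I would take step functions $f_n\to f$ uniformly on $[0,r]$. Because $P_\alpha(\cdot)$ is bounded in operator norm on $[0,r]$ (indeed $\|P_\alpha(t)\|\le M_S\,g_\alpha(r)$, where $M_S=\sup_{[0,r]}\|S_\alpha\|$ and $g_{\alpha-1}$ is integrable near $0$), the left-hand sides converge: $(P_\alpha*f_n)(t)\to(P_\alpha*f)(t)$. By the semivariation estimate above, the right-hand sides also converge, to $-\int_0^t d_s[S_\alpha(t-s)]f(s)$. Since $(P_\alpha*f_n)(t)\in D(A)$ with $A(P_\alpha*f_n)(t)=-\int_0^t d_s[S_\alpha(t-s)]f_n(s)$, the closedness of $A$ delivers both $(P_\alpha*f)(t)\in D(A)$ and the asserted formula.

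I expect the main obstacle to be conceptual rather than computational: because $A$ generates a resolvent family and not a semigroup, I cannot detect membership in $D(A)$ through a limit of $h^{-1}(S_\alpha(h)-I)$, so every claim that an element lies in $D(A)$ must be funneled through the explicit identities of Proposition \ref{alpha-prop} and the closedness of $A$. The delicate step is therefore the step-function case, where matching $A\int_a^b P_\alpha(t-s)x\,ds$ against the telescoping Stieltjes sum $-\int_a^b d_s[S_\alpha(t-s)]x$ is what pins down the correct sign and form of the limiting identity.
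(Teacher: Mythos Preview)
Your argument is correct and follows exactly the route the paper intends: the paper does not spell out a proof of this lemma but simply refers to \cite{CSP,Tr}, whose method is precisely the one you describe---verify the identity on step functions via the formula $A\int_0^t P_\alpha(s)x\,ds=S_\alpha(t)x-x$ (Proposition~\ref{alpha-prop}(a)), then pass to continuous $f$ by uniform approximation, using the semivariation bound to control the Stieltjes side and the closedness of $A$ to conclude. One small refinement worth making explicit: if you choose the approximating step functions $f_n$ to take the values $f(\xi_j)$ on the partition intervals, then the telescoping sums $A(P_\alpha*f_n)(t)$ are literally Riemann--Stieltjes sums for $-\int_0^t d_s[S_\alpha(t-s)]f(s)$, so their convergence is immediate from the existence of that integral rather than requiring a separate semivariation estimate on the difference.
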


\begin{lem}\label{APcont}
If $f \in C([0,r];X)$ and the $\alpha$-times resolvent family
$S_\alpha(t)$ is of bounded semivariation on $[0,r]$, then $\int_0^t
d_s[S_\alpha(t-s)]f(s)$ is continuous in $t$ on $[0,r]$.
\end{lem}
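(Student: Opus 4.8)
The plan is to view, for each fixed $t\in[0,r]$, the map $T_t\colon C([0,r];X)\to X$ defined by $T_tf=\int_0^t d_s[S_\alpha(t-s)]f(s)$ as a bounded linear operator, to show that the family $\{T_t\}$ is uniformly bounded in $t$, and then to prove continuity of $t\mapsto T_tf$ first for smooth $f$ and afterwards for general $f$ by a uniform approximation argument. The Riemann--Stieltjes integral defining $T_tf$ exists because $S_\alpha(\cdot)$ is of bounded semivariation while $f$ is continuous (cf.\ \cite{Ho}).

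First I would establish a uniform bound. For a fixed $t$, a subdivision $0=s_0<\cdots<s_n=t$ and tags $\xi_i\in[s_{i-1},s_i]$, the approximating sum $\sum_{i=1}^n[S_\alpha(t-s_i)-S_\alpha(t-s_{i-1})]f(\xi_i)$ becomes, after setting $\tau_i=t-s_i$, reversing the order of summation and absorbing signs into the vectors, a sum of the form occurring in the definition of $SV_d$, with vectors of norm at most $\norm{f}_{C([0,r];X)}$ and nodes forming a subdivision of $[0,t]\subseteq[0,r]$. Since the semivariation over $[0,t]$ is dominated by $SV[S_\alpha]$, each such sum is bounded by $SV[S_\alpha]\,\norm{f}_{C([0,r];X)}$, and letting the mesh tend to zero yields
\begin{equation*}
\norm{\int_0^t d_s[S_\alpha(t-s)]f(s)}\le SV[S_\alpha]\,\norm{f}_{C([0,r];X)}\qquad(t\in[0,r]).
\end{equation*}
Thus $\norm{T_t}\le SV[S_\alpha]=:M$ for every $t$.

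Next I would treat $f\in C^1([0,r];X)$, which form a dense subspace of $C([0,r];X)$. As such $f$ is of bounded variation and $S_\alpha(t-\cdot)$ is strongly continuous, integration by parts for the operator-valued Stieltjes integral applies, and using $S_\alpha(0)=I$ gives
\begin{equation*}
\int_0^t d_s[S_\alpha(t-s)]f(s)=f(t)-S_\alpha(t)f(0)-\int_0^t S_\alpha(t-s)f'(s)\,ds.
\end{equation*}
Each term on the right is continuous in $t$: $f$ is continuous, $t\mapsto S_\alpha(t)f(0)$ is continuous by strong continuity, and the convolution $(S_\alpha*f')(t)$ of the strongly continuous family $S_\alpha$ with the continuous function $f'$ is continuous. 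Hence $t\mapsto T_tf$ is continuous whenever $f\in C^1$.

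Finally, for arbitrary $f\in C([0,r];X)$ I would choose $f_k\in C^1([0,r];X)$ with $f_k\to f$ uniformly; the uniform bound gives $\norm{T_tf_k-T_tf}\le M\norm{f_k-f}_{C([0,r];X)}\to0$ uniformly in $t$, so $t\mapsto T_tf$ is a uniform limit of continuous functions and is therefore continuous. The main obstacle is that $t$ occurs simultaneously in the upper limit of integration and inside the integrator $S_\alpha(t-s)$, which blocks a naive continuity estimate; the integration-by-parts identity removes this difficulty by converting the Stieltjes integral into boundary terms plus a convolution whose continuity in $t$ is transparent, and the uniform bound then propagates continuity from the dense smooth subspace to all continuous $f$.
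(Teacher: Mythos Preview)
Your argument is correct. The paper itself omits the proof of this lemma, stating only that it ``can be proved similarly as that in \cite{CSP, Tr}''; the arguments in those references likewise hinge on the uniform bound $\norm{T_t}\le SV[S_\alpha]$ together with an approximation step, so your three-stage strategy---uniform semivariation estimate, explicit continuity for $f\in C^1$ via the integration-by-parts identity
\[
\int_0^t d_s[S_\alpha(t-s)]f(s)=f(t)-S_\alpha(t)f(0)-\int_0^t S_\alpha(t-s)f'(s)\,ds,
\]
and passage to general $f\in C([0,r];X)$ by density---is precisely in the spirit of what the paper defers to, and is in fact a clean self-contained proof.
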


We next turn to the solution of
\begin{equation}\label{ACPf0}
\begin{split}
&{\bf D}_t^\alpha u(t) = Au(t)+f(t), \quad t\in [0,r], \\
&u(0)=0,\, u'(0)=0,
\end{split}
\end{equation}
where $A$ is the generator of an $\alpha$-times resolvent family. If
$v(t)$ is a mild solution of (\ref{ACPf0}), then by Definition
\ref{defn-solution} $(g_\alpha
* v)(t) \in D(A)$ and $v(t) = A(g_\alpha
*v)(t) + (g_\alpha
*f)(t)$. It then follows from the properties of $\alpha$-times
resolvent family that
$$
1*v = (S_\alpha - A(g_\alpha* S_\alpha))*v= S_\alpha *v - S_\alpha *
A(g_\alpha*v)= S_\alpha*(v - A(g_\alpha*v))= S_\alpha* g_\alpha *f,
$$
which implies that $g_\alpha * S_\alpha *f$ is differentiable and
$$
v(t) = \frac{d}{dt} (g_\alpha* S_\alpha *f)(t)=
(g_{\alpha-1}*S_\alpha*f)(t)= (P_\alpha *f)(t).
$$
Therefore, the mild solution of (\ref{ACPf-alpha}) is given by
\begin{equation}\label{strong-solution}
u(t) = S_\alpha(t)x +\int_0^t S_\alpha(s)y ds+ (P_\alpha * f)(t).
\end{equation}

\begin{prop} \label{sPSf}Let $A$ be the generator of an $\alpha$-times resolvent
family $S_\alpha(\cdot)$, and let
 $f \in C([0,r];X)$ and
$x,y \in D(A)$. Then the following statements are equivalent:

(a) (\ref{ACPf-alpha}) has a strong solution;

(b) $(S_\alpha * f)(\cdot) \in C^1([0,r];X)$;

(c) $(P_\alpha * f)(t) \in D(A)$ for $0 \le t \le r$ and $A(P_\alpha
* f)(t)$ is continuous in $t$ on $[0,r]$.

\end{prop}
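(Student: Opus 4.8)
The plan is to reduce the whole statement to the inhomogeneous term $w:=P_\alpha*f$ and then to exploit the single identity furnished by Proposition \ref{alpha-prop}(d). First I would note that a strong solution is automatically a mild solution: convolving ${\bf D}_t^\alpha u=Au+f$ with $g_\alpha$ and using the standard identity $g_\alpha*{\bf D}_t^\alpha u=u-x-ty$ (which follows from $g_\alpha*g_{2-\alpha}=g_2$ and the relaxed definition of the Caputo derivative) together with the closedness of $A$ and $u\in C([0,r];D(A))$ gives $u-x-ty=A(g_\alpha*u)+g_\alpha*f$. Since mild solutions are unique (this is exactly the computation preceding \eqref{strong-solution}), every strong solution must equal $u_0+w$, where $u_0(t)=S_\alpha(t)x+\int_0^tS_\alpha(s)y\,ds$ and $w=P_\alpha*f$. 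Because $x,y\in D(A)$, the result of \cite{Baj} quoted above makes $u_0$ a strong solution of the homogeneous problem, so (a) is equivalent to $w$ being a strong solution of \eqref{ACPf0}. In particular this already yields (a)$\Rightarrow$(c): from $w=u-u_0$ with $u(t),u_0(t)\in D(A)$ and both $Au$ and $Au_0$ continuous, one reads off $w(t)\in D(A)$ and $Aw=Au-Au_0$ continuous.

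For the equivalence (b)$\Leftrightarrow$(c) I would convolve once more. Since $g_1*w=g_1*g_{\alpha-1}*S_\alpha*f=g_\alpha*S_\alpha*f$, \eqref{convolution} gives
$$
A(g_1*w)=(S_\alpha-1)*f=S_\alpha*f-g_1*f .
$$
If (c) holds, closedness lets me move $A$ inside the integral, $A(g_1*w)=g_1*(Aw)$, so that $S_\alpha*f=g_1*(f+Aw)$ is a primitive of a continuous function and hence lies in $C^1$; this is (b). Conversely, if (b) holds then the right-hand side above lies in $C^1$, so $\Phi:=A(g_1*w)\in C^1$. Applying $A$ to the difference quotients $\varepsilon^{-1}[(g_1*w)(t+\varepsilon)-(g_1*w)(t)]$, which converge to $(g_1*w)'(t)=w(t)$ while their $A$-images $\varepsilon^{-1}[\Phi(t+\varepsilon)-\Phi(t)]$ converge to $\Phi'(t)$, the closedness of $A$ delivers $w(t)\in D(A)$ with $Aw=(S_\alpha*f)'-f$, which is continuous; this is (c).

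It then remains to show that (b)/(c) forces $w$ to be a strong solution of \eqref{ACPf0}, giving (a). Membership $w\in C([0,r];D(A))$ is (c). For the regularity hypotheses I would use $g_{2-\alpha}*w=g_{2-\alpha}*g_{\alpha-1}*S_\alpha*f=g_1*(S_\alpha*f)$; by (b) this lies in $C^2$ with $\tfrac{d^2}{dt^2}(g_{2-\alpha}*w)=(S_\alpha*f)'$. Since $(S_\alpha*f)(0)=0$ I may write $S_\alpha*f=g_1*(S_\alpha*f)'$, whence $w=g_{\alpha-1}*(S_\alpha*f)=g_\alpha*(S_\alpha*f)'$; as $\alpha>1$ this shows $w\in C^1$ with $w'=g_{\alpha-1}*(S_\alpha*f)'$ and $w(0)=w'(0)=0$. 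Consequently the relaxed Caputo derivative is ${\bf D}_t^\alpha w=\tfrac{d^2}{dt^2}(g_{2-\alpha}*w)=(S_\alpha*f)'=f+Aw$, the last equality by the formula for $Aw$ from the previous paragraph. Thus $w$ solves \eqref{ACPf0}, and $u=u_0+w$ is the required strong solution of \eqref{ACPf-alpha}.

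The main obstacle is the implication (b)$\Rightarrow$(c): recovering the pointwise statement $w(t)\in D(A)$ from nothing more than $C^1$-regularity of the antiderivative $\Phi=A(g_1*w)$. This is precisely where closedness must be applied to difference quotients rather than to $w$ itself, and it is the fractional counterpart of the arguments in \cite{Tr,CSP}. A secondary technical point to handle carefully is the reduction step---verifying that a strong solution is a mild solution and that the boundary values $w(0)=w'(0)=0$ hold---so that the relaxed form of the Caputo derivative applies and no spurious initial terms survive.
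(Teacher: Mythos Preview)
Your proof is correct and follows essentially the same strategy as the paper: both hinge on the identity $A(g_1*P_\alpha*f)=(S_\alpha-1)*f$ from Proposition~\ref{alpha-prop}(d) together with closedness of $A$ (applied to difference quotients) to pass between (b) and (c). The paper runs the cycle (a)$\Rightarrow$(b)$\Rightarrow$(c)$\Rightarrow$(a) and leaves the final verification that $u$ is a strong solution as ``easy to check,'' whereas you route (a)$\Rightarrow$(c) directly by subtracting the homogeneous solution and spell out the (c)$\Rightarrow$(a) step in full, but the underlying ideas coincide.
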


\begin{proof}
(a) If $u(t)$ is a strong solution of (\ref{ACPf-alpha}), then $u$
is given by (\ref{strong-solution}) since every strong solution is a
mild solution. Therefore, by the definition of strong solutions,
$g_{2-\alpha}*P_\alpha*f= g_1*S_\alpha*f \in C^2([0,r];X)$; it then
follows that $S_\alpha * f \in C^1([0,r];X)$, this is (b).

$(b) \Rightarrow (c)$. Suppose that $S_\alpha * f \in C^1([0,r];X)$.
Since $g_1 * P_\alpha *f = g_\alpha * S_\alpha *f$, by Proposition
\ref{alpha-prop}(d), $g_1 * P_\alpha * f \in D(A)$ and
\begin{equation}\label{S*f}
A(g_1*P_\alpha *f) = A(g_\alpha * S_\alpha *f)= (S_\alpha - 1)*f.
\end{equation}
Since $A$ is closed and $S_\alpha * f\in C^1([0,r];X)$, we have
$P_\alpha * f \in D(A)$ and $A (P_\alpha * f) = (S_\alpha*f)' - f$
is continuous.

$(c) \Rightarrow (a)$. By (\ref{S*f}), $g_1* A(P_\alpha
* f) =A(g_1*P_\alpha
*f) = (S_\alpha - 1)*f$, therefore $S_\alpha * f$ is differentiable and thus
$g_{2-\alpha}*P_\alpha*f=  g_1*S_\alpha*f$ is in $C^2([0,r];X)$. It
is easy to check that $u(t)$ defined by (\ref{strong-solution}) is a
strong solution of (\ref{ACPf-alpha}).
\end{proof}

Now we are in the position to give the main result of this paper.
The proof is similar to that of Proposition 3.1 in \cite{Tr} or
Theorem 4.2 in \cite{CSP}, we write it out for completeness.

\begin{thm}
Suppose that $A$ generates an $\alpha$-times resolvent family
$\{S_\alpha(t)\}_{t \ge 0}$. Then the function
(\ref{strong-solution}) is a strong solution of the Cauchy problem
(\ref{ACPf-alpha}) for every pair $x, y \in D(A)$ and continuous
function $f$ if and only if $S_\alpha(\cdot)$ is of bounded
semivariation on $[0,r]$.
\end{thm}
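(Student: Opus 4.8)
The plan is to split the equivalence into its two implications and, in both, to reduce the whole question to the inhomogeneous term $(P_\alpha * f)$: the homogeneous part $S_\alpha(t)x + \int_0^t S_\alpha(s)y\,ds$ is already a strong solution whenever $x,y \in D(A)$, by the properties of the resolvent family, and it is insensitive to the semivariation hypothesis. Concretely, since every strong solution must equal (\ref{strong-solution}), Proposition \ref{sPSf} shows that ``(\ref{strong-solution}) is a strong solution for every $f \in C([0,r];X)$'' is equivalent to ``$(P_\alpha*f)(t)\in D(A)$ for all $t$ and $A(P_\alpha*f)(\cdot)$ is continuous, for every continuous $f$'' (condition (c)). The sufficiency is then immediate: if $S_\alpha(\cdot)$ is of bounded semivariation, Lemma \ref{P*finD(A)} supplies $(P_\alpha*f)(t)\in D(A)$ with its Stieltjes representation, and Lemma \ref{APcont} supplies the continuity of $A(P_\alpha*f)$ in $t$, which is exactly condition (c).

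For necessity I would set up a closed-graph argument. Assuming a strong solution exists for every $f$, I define the linear map $T\colon C([0,r];X)\to C([0,r];X)$ by $Tf = A(P_\alpha*f)$, which is well defined and lands in $C([0,r];X)$ by condition (c). To see it is closed, suppose $f_n \to f$ and $A(P_\alpha*f_n)\to g$ uniformly; convolution against the continuous kernel $P_\alpha$ is bounded on $C([0,r];X)$, so $(P_\alpha*f_n)(t)\to (P_\alpha*f)(t)$, and since $A$ is closed we get $(P_\alpha*f)(t)\in D(A)$ with $A(P_\alpha*f)=g=Tf$. The closed graph theorem then produces a constant $C$ with $\|A(P_\alpha*f)\|_\infty \le C\|f\|_\infty$ for all continuous $f$.

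The crux is to convert this operator bound into a bound on the semivariation. The key computation, using only Proposition \ref{alpha-prop}(a) and a change of variables, is that for $x\in X$ and $0\le a<b\le t$ one has $A(P_\alpha*(\mathbf{1}_{[a,b]}x))(t) = S_\alpha(t-a)x - S_\alpha(t-b)x$. Given a subdivision $0=t_0<\cdots<t_n=r$ and unit vectors $x_1,\dots,x_n$, I would take the step function equal to $x_i$ on $[r-t_i,\,r-t_{i-1}]$; evaluating at $t=r$ then yields precisely the semivariation sum $\sum_i [S_\alpha(t_i)-S_\alpha(t_{i-1})]x_i$, whose norm is what we must control.

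The main obstacle is that this step function is not continuous, so the bound $\|Tf\|_\infty \le C\|f\|_\infty$ does not apply to it directly. I would overcome this by approximation: replace each indicator by a continuous bump $\psi_i^\varepsilon$ supported strictly inside $(r-t_i,\,r-t_{i-1})$, with disjoint supports and values in $[0,1]$, so that $f_\varepsilon=\sum_i \psi_i^\varepsilon x_i$ satisfies $\|f_\varepsilon\|_\infty\le\max_i\|x_i\|\le 1$. Integration by parts (as in the derivation above, now with no boundary terms since $\psi_i^\varepsilon$ vanishes at $0$ and $r$) rewrites $A(P_\alpha*(\psi_i^\varepsilon x_i))(r)$ as $\int_0^r S_\alpha(r-s)(\psi_i^\varepsilon)'(s)x_i\,ds$; as $\varepsilon\to0$ the derivatives $(\psi_i^\varepsilon)'$ behave like the difference of unit point masses at the endpoints $r-t_i$ and $r-t_{i-1}$, and the strong continuity of $S_\alpha(\cdot)$ forces $A(P_\alpha*f_\varepsilon)(r)\to \sum_i[S_\alpha(t_i)-S_\alpha(t_{i-1})]x_i$. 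Passing to the limit in $\|A(P_\alpha*f_\varepsilon)(r)\|\le C\|f_\varepsilon\|_\infty\le C$ bounds the semivariation sum by $C$ uniformly over all subdivisions and unit vectors, giving $SV[S_\alpha]\le C<\infty$.
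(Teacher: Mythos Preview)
Your proposal is correct and follows essentially the same approach as the paper: sufficiency via Lemmas \ref{P*finD(A)} and \ref{APcont}, and necessity via a closed-graph bound on $f\mapsto A(P_\alpha*f)$ tested against continuous approximations of step functions, with strong continuity of $S_\alpha$ handling the limit. The only cosmetic differences are that the paper works with the single-point evaluation $L(f)=(P_\alpha*f)(r)$ instead of your full map $T$, and uses piecewise linear interpolants (handled via Proposition \ref{alpha-prop}(b)) rather than compactly supported bumps; neither choice changes the argument in any substantive way.
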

\begin{proof}
The sufficiency follows from Lemmas \ref{P*finD(A)} and
\ref{APcont}.

Conversely, suppose that for $x,y \in D(A)$ and continuous function
$f$, $u(t)$ given by (\ref{strong-solution}) is a strong solution
for (\ref{ACPf-alpha}). Define the bounded linear operator $L:
C([0,r];X) \to X$ by $L(f) = (P_\alpha*f)(r)$. By Proposition
\ref{sPSf} (c) $Lf \in D(A)$, it thus follows from the closedness of
$A$ that $AL: C([0,r];X) \to X$ is bounded.

Let $\{d_i\}_{i=0}^n$ be a subdivision of $[0,r]$ and $\epsilon>0$
such that $\epsilon < \min_{1\le i \le n}\{|d_{i}-d_{i-1}|\}$. For
$x_i \in X$ with $\|x_i\| \le 1$ ($i=1, 2, \cdots,  n+1$), define
$f_{d, \epsilon} \in C([0,r];X)$ by
$$
f_{d,\epsilon}(\tau) = \left\{\begin{array}{ll} x_i, & d_{i-1} \le
\tau \le d_i - \epsilon \\ x_{i+1} + \frac{\tau -
d_i}{\epsilon}(x_{i+1} - x_i), & d_i - \epsilon \le \tau \le
d_i\end{array}\right.,
$$
then $\|f_{d,\epsilon}\|_{C([0,r];X)} \le 1$. By Proposition
\ref{alpha-prop},
\begin{eqnarray*}
AL(f_{d,\epsilon}) &=& A \int_0^r P_\alpha(r-s)
f_{d,\epsilon}(s) ds\\
&=& \sum_{i=1}^n \Big[ A \int_{d_{i-1}}^{d_i -\epsilon}
P_\alpha(r-s)x_i ds \\
&& + A \int_{d_i -\epsilon}^{d_i} P_\alpha(r-s) x_{i+1} ds + A
\int_{d_i-\epsilon}^{d_i}
\frac{s-d_i}{\epsilon}P_\alpha(r-s)(x_{i+1}-x_i)dx\Big]\\
\end{eqnarray*}
\begin{eqnarray*}
&=& \sum_{i=1}^n \Big\{[S_\alpha(r-d_{i-1})x_i -
S_\alpha(r-d_{i}+\epsilon)x_i]\\
&& +[S_\alpha(r-d_{i}+\epsilon)x_{i+1} - S_\alpha(r-d_{i})x_{i+1}]
\\
&& - \frac{d}{\epsilon} [S_\alpha(r-d_{i}+\epsilon)(x_{i+1}-x_i) -
S_\alpha(r-d_{i})(x_{i+1}-x_i)] \\
&& + \frac{1}{\epsilon}[(d_i -
\epsilon)S_\alpha(r-d_{i}+\epsilon)(x_{i+1}-x_i) - d_i
S_\alpha(r-d_{i})(x_{i+1}-x_i)]\\
&& +\frac{1}{\epsilon} \int_{d_i - \epsilon}^{d_i} S_\alpha(r-s)
(x_{i+1} - x_i)ds\Big\} \\
&=& \sum_{i=1}^n \Big\{[S_\alpha(r-d_{i-1})x_i -
S_\alpha(r-d_{i})x_{i+1}]\\
&& +\frac{1}{\epsilon} \int_{d_i - \epsilon}^{d_i} S_\alpha(r-s)
(x_{i+1} - x_i)ds\Big\} \\
&=& \sum_{i=1}^n \Big\{[S_\alpha(r-d_{i-1})- S_\alpha(r-d_i)]x_i -
S_\alpha(r-d_{i})(x_{i+1}-x_i)\\
&& +\frac{1}{\epsilon} \int_{d_i - \epsilon}^{d_i} S_\alpha(r-s)
(x_{i+1} - x_i)ds\Big\},
\end{eqnarray*}
it then follows that
\begin{eqnarray*}
&&\Big\|\sum_{i=1}^n  [S_\alpha(r-d_{i-1})- S_\alpha(r-d_i)]x_i\Big\|\\
& \le& \|AL(f_{d,\epsilon})\|+ \sum_{i=1}^n
\Big\|S_\alpha(r-d_{i})(x_{i+1}-x_i)
 -\frac{1}{\epsilon} \int_{d_i - \epsilon}^{d_i} S_\alpha(r-s)
(x_{i+1} - x_i)ds\Big\|.
\end{eqnarray*}
By letting $\epsilon \to 0$, we obtain that $S_\alpha$ is of bounded
semivariation on $[0,r]$.
\end{proof}

\begin{cor}
Suppose that $\{S_\alpha(t)\}_{t \ge 0}$ is an $\alpha$-times
resolvent family with generator $A$ and $S_\alpha(\cdot)$ is of
bounded semivariation on $[0,r]$ for some $r>0$. Then
$R(P_\alpha(t))\subset D(A)$ for $t \in [0,r]$ and $\|t
AP_\alpha(t)\|$ is bounded on $[0,r]$.
\end{cor}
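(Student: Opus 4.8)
The plan is to reduce the weighted operator $t\,P_\alpha(t)$ to a convolution of exactly the type already handled by Lemma \ref{P*finD(A)}. The central step is to establish the identity
\begin{equation*}
t\,P_\alpha(t)x = \alpha\,\bigl(P_\alpha * S_\alpha(\cdot)x\bigr)(t), \qquad t\in[0,r],\ x\in X. \tag{$\ast$}
\end{equation*}
Once $(\ast)$ is available the corollary is almost immediate: the integrand $s\mapsto \alpha S_\alpha(s)x$ is continuous, and $S_\alpha$ is of bounded semivariation by hypothesis, so Lemma \ref{P*finD(A)} shows that the right-hand side of $(\ast)$ lies in $D(A)$. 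Hence $t\,P_\alpha(t)x\in D(A)$, and therefore $P_\alpha(t)x\in D(A)$ for every $t\in(0,r]$; since $P_\alpha=g_{\alpha-1}*S_\alpha$ is continuous with $P_\alpha(0)=0\in D(A)$, this gives $R(P_\alpha(t))\subset D(A)$ on all of $[0,r]$.

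To prove $(\ast)$ I would begin from Proposition \ref{alpha-prop}(c). Set $\Psi(s):=s\,P_\alpha(s)x$; because $P_\alpha=g_{\alpha-1}*S_\alpha$ is continuous and vanishes at $0$ (as $\alpha-1\in(0,1)$), $\Psi\in C([0,r];X)$ with $\Psi(0)=0$. Part (c) states that $g_\alpha*\Psi\in D(A)$ and $A(g_\alpha*\Psi)(t)=\Psi(t)-\alpha(g_\alpha*S_\alpha)(t)x$, which rearranges to
\begin{equation*}
\Psi(t) = A(g_\alpha*\Psi)(t) + \bigl(g_\alpha*(\alpha S_\alpha(\cdot)x)\bigr)(t).
\end{equation*}
This is precisely the relation characterizing a mild solution of (\ref{ACPf0}) with forcing term $f=\alpha S_\alpha(\cdot)x$ (Definition \ref{defn-solution} with $x=y=0$), and the required membership $g_\alpha*\Psi\in D(A)$ is furnished by part (c) itself. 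Since the mild solution of (\ref{ACPf0}) is unique and equals $P_\alpha*f$ — as established in the derivation preceding (\ref{strong-solution}) — I conclude $\Psi=P_\alpha*(\alpha S_\alpha(\cdot)x)$, which is exactly $(\ast)$.

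For the quantitative statement, combining $(\ast)$ with Lemma \ref{P*finD(A)} yields
\begin{equation*}
A\bigl(t\,P_\alpha(t)x\bigr) = -\alpha\int_0^t d_s[S_\alpha(t-s)]\,S_\alpha(s)x .
\end{equation*}
Estimating this Riemann--Stieltjes integral by the semivariation and noting that the semivariation of $s\mapsto S_\alpha(t-s)$ on $[0,t]$ equals that of $S_\alpha$ on $[0,t]$, hence is at most $SV[S_\alpha]$, I get $\|t\,AP_\alpha(t)x\|\le \alpha\,SV[S_\alpha]\,\sup_{0\le s\le r}\|S_\alpha(s)x\|\le \alpha\,SV[S_\alpha]\,M_0\,\|x\|$, where $M_0:=\sup_{0\le s\le r}\|S_\alpha(s)\|<\infty$ by strong continuity and the uniform boundedness principle. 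As this bound is independent of $t$, the function $\|t\,AP_\alpha(t)\|$ is bounded on $[0,r]$. The main obstacle is the clean justification of $(\ast)$: one must verify that $\Psi$ genuinely qualifies as a mild solution (continuity together with $g_\alpha*\Psi\in D(A)$) and invoke uniqueness in the correct function class. As an independent safeguard I would confirm $(\ast)$ on the transform side, where both members have Laplace transform $\alpha\lambda^{\alpha-1}(\lambda^\alpha-A)^{-2}x$, using $\widehat{P_\alpha}(\lambda)=(\lambda^\alpha-A)^{-1}$.
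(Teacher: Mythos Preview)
Your proof is correct and follows essentially the same route as the paper: both identify $t\,P_\alpha(t)x$ with $(P_\alpha*\alpha S_\alpha(\cdot)x)(t)$ by recognizing, via Proposition~\ref{alpha-prop}(c), that $t\,P_\alpha(t)x$ is the (unique) mild solution of (\ref{ACPf0}) with forcing $f=\alpha S_\alpha(\cdot)x$. The only cosmetic difference is that the paper then appeals to Proposition~\ref{sPSf} for the boundedness conclusion, whereas you invoke Lemma~\ref{P*finD(A)} directly and extract an explicit bound $\alpha\,SV[S_\alpha]\,M_0$; this is a minor streamlining of the same argument.
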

\begin{proof}
 For $x \in X$, consider $f(t) =\alpha
S_\alpha(t)x$. By Proposition \ref{alpha-prop}(c), $tP_\alpha(t)x$
is a mild solution of (\ref{ACPf0}). Moreover, it follows from
Proposition \ref{sPSf} that $P_\alpha *f$ is a strong solution of
(\ref{ACPf0}). Since a strong solution must be a mild solution, we
have $(P_\alpha *f)(t) = tP_\alpha(t)x$. Thus our claim follows from
Proposition \ref{sPSf}.
\end{proof}

\begin{rem}
Let $\alpha=1$. If $A$ generates a $C_0$-semigroup $T(\cdot)$, then
the condition that $tAT(t)$ is bounded on $[0,r]$ implies that
$T(\cdot)$ is analytic (see \cite{Pa}). When $\alpha=2$ and $A$
generates a cosine function $C(\cdot)$, then the condition that
$tAC(t)$ is bounded on $[0,r]$ implies that $A$ is bounded
(\cite{CSP}). However, since there is no semigroup properties for
$\alpha$-times resolvent family, it is not clear that one can get
the analyticity of $S_\alpha(\cdot)$ from the local boundedness of
$tAP_\alpha(t)$.
\end{rem}

\end{document}